\newtheorem{theorem}{Theorem}[section]
\newtheorem{proposition}[theorem]{Proposition}
\newtheorem{lemma}[theorem]{Lemma}
\newtheorem{definition}[theorem]{Definition}
\newtheorem{corollary}[theorem]{Corollary}
\newtheorem{example}[theorem]{Example}
\newtheorem{conjecture}[theorem]{Conjecture}
\author[M. B\'ona]{Mikl\'os  B\'ona}
\title[Bound for Permutations Avoiding a Pattern of  Given Length]{On the Best
 Upper Bound for Permutations Avoiding a Pattern of
a Given Length}
\address{\rm M. B\'ona, Department of Mathematics, 
University of Florida,
358 Little Hall, 
PO Box 118105, 
Gainesville, FL 32611--8105 (USA)
}
\date{\today}
\begin{document}

\begin{abstract}
Numerical evidence suggests that certain permutation patterns of length $k$
are easier to avoid than any other patterns of that same length. We prove
that these patterns are avoided by no more than $(2.25k^2)^n$ permutations
of length $n$. In light of this, we conjecture that no pattern of length $k$
is avoided by more than that many permutations of length $n$. 
\end{abstract}

\maketitle

\section{Introduction}
\subsection{Upper Bounds for Pattern Avoiding Permutations}
 The theory of pattern avoiding permutations has
seen tremendous progress during the last two decades. The key definition is
the following. Let $k\leq n$, let $p=p_1p_2\cdots p_n$ be a permutation of
 length $n$, and
let $q=q_1q_2\cdots q_k$ be a permutation of length $k$. We say that $p$
 avoids $q$ if there 
are no $k$ indices $i_1<i_2<\cdots <i_k$ so that for all $a$ and $b$, 
the inequality $p_{i_a}<p_{i_b}$ holds if and only if the inequality $q_a<q_b$
holds. For instance, $p=2537164$ avoids $q=1234$ because $p$ does not contain
an increasing subsequence of length four. See \cite{combperm} for an overview
of the main results on pattern avoiding permutations.

Let $S_n(q)$ be the number of permutations of length $n$ (or, in what follows,
$n$-permutations) that avoid the pattern $q$. Since the spectacular result of
Adam Marcus and G\'abor Tardos \cite{marcus}, it is known that for 
every pattern $q$, there exists a constant $c_q$ so that the inequality
$S_n(q)\leq c_q^n$ holds for all $n$. As there are only $k!$ patterns of 
length $q$, it follows that for all positive integers $k$, there exists
a constant $c_k$ so that for all patterns $q$ of length $k$, the inequality
\begin{equation} \label{univcon} S_n(q)\leq c_k^n \end{equation}
holds for all positive integers $n$. 

However, the quest of finding the {\em best}
 constant $c_k$ is in (\ref{univcon}),
is wide open. The result of Marcus and Tardos \cite{marcus} has only shown that
$c_k\leq 15^{2k^4\cdot {k^2\choose k}}$. Josef Cibulka \cite{cibulka} has
 improved this
bound by showing that $c_k\leq 2^{O(k \log k)}$, but even this bound seems
to be very far from reality, as we will explain. 

 Richard Arratia \cite{arratia} has conjectured that 
$c_k=(k-1)^2$ is sufficient for all $k$, but this conjecture was refuted
by Albert and al \cite{albert}, who proved that if $n$ is large enough, then
$S_n(1324)\geq 9.42^n$.  

\subsection{Layered Patterns}
A {\em layered} pattern is a pattern consisting of decreasing subsequences
(the layers) so that the entries decrease within the layers but increase
among the layers, as in 3215476. Equivalently, a pattern is layered if and
only if it avoids both 231 and 312. Layered permutations are important since
numerical evidence (computed first by Julian West \cite{west} and later 
replicated by many others)
supports the following conjecture.

\begin{conjecture} \label{layered}
Let $q$ be a non-layered pattern of length $k$, and let $Q$ be a layered
pattern of length $k$. Then for all positive integers $n$, 
the inequality
\begin{equation} \label{ineqlayered} S_n(q)\leq S_n(Q)
\end{equation}
holds. 
\end{conjecture}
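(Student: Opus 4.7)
The plan is a two-step reduction. The conjecture asserts $S_n(q) \leq S_n(Q)$ for every non-layered $q$ and every layered $Q$ of length $k$; equivalently, the maximum of $S_n(q)$ over non-layered patterns $q$ of length $k$ is at most the minimum of $S_n(Q)$ over layered patterns $Q$ of length $k$. I would attack each side of this extremal inequality separately.

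First I would look for a canonical extremal layered pattern $P_k$, meaning one with $S_n(P_k) \leq S_n(Q)$ for every layered $Q$ of length $k$ and every $n$. The natural candidate is $P_k = 12\cdots k$: for $k=3$ all layered patterns are Wilf-equivalent (giving the Catalan number $C_n$), and for $k=4$ one has $S_n(1234) \leq S_n(Q)$ for every layered $Q$ of length four. The tactic would be a \emph{layer-splitting monotonicity}: show that if $Q'$ is obtained from a layered $Q$ by splitting one decreasing layer of length at least two into two shorter adjacent decreasing layers, then $S_n(Q') \leq S_n(Q)$. Iterated splitting of every layer down to singletons then yields $S_n(12\cdots k) \leq S_n(Q)$. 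I would try to construct the required injection from $Q'$-avoiders into $Q$-avoiders using the block decomposition that the layer boundaries of $Q$ induce on any permutation.

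Second, with $P_k = 12\cdots k$ in hand, the conjecture reduces to the single universal comparison $S_n(q) \leq S_n(12\cdots k)$ for every non-layered $q$ of length $k$. Here the non-layered hypothesis guarantees a copy of $231$, or by the reverse-complement symmetry of $312$, inside $q$. Since the $(12\cdots k)$-avoiders of length $n$ are exactly the permutations whose longest increasing subsequence has length less than $k$, it suffices to exhibit an injection from $q$-avoiders into permutations with no long increasing subsequence. One promising angle is RSK: use the embedded $231$ (or $312$) of $q$ to force structural constraints on the RSK insertion of a $q$-avoider, then bijectively transport those permutations to ones with a short first row. A more direct approach would build the injection combinatorially, processing the permutation left to right and modifying any emerging long increasing subsequence using the obstruction inside $q$.

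The main obstacle is the second step: producing a uniform injection that handles every non-layered pattern $q$ of length $k$. The first step is already a nontrivial extremal claim about layered patterns whose status for $k \geq 5$ appears to be open, and the exponential-level bounds of \cite{marcus} and \cite{cibulka} do not distinguish layered from non-layered patterns. My proposal should therefore be read as a roadmap: the two reductions would collapse the universal quantifier ``for every layered $Q$'' into a single injective comparison with $12\cdots k$, but delivering that injection is where the essential work lies, and a complete proof likely requires techniques finer than those currently in the literature.
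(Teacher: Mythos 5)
First, note that the statement you are addressing is Conjecture \ref{layered}: the paper offers no proof of it (it is presented only as motivation, supported by numerical evidence), so there is no argument of the author's to compare yours against. Your proposal is, as you acknowledge, a roadmap rather than a proof, and both of its steps are open problems; but the more serious issue is that your second step is provably false, and with it the reformulation on which your whole plan rests. You restate the conjecture as ``the maximum of $S_n(q)$ over non-layered $q$ is at most the minimum of $S_n(Q)$ over layered $Q$,'' and then aim at the single comparison $S_n(q)\le S_n(12\cdots k)$ for every non-layered $q$. Take $k=4$ and $q=4231$. The pattern $4231$ contains $312$ (witnessed by $423$), so it is non-layered, and $S_n(4231)=S_n(1324)$ by reversal. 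But $S_7(1324)=2762>2761=S_7(1234)$, and asymptotically the gap is enormous: by the result of Albert et al.\ \cite{albert} cited in the paper, $S_n(1324)\ge 9.42^n$ for large $n$, while $S_n(1234)$ grows like $9^n$ times a polynomial correction by Regev \cite{regev}. So $S_n(4231)\le S_n(1234)$ fails, your step 2 fails already at $k=4$, and indeed the conjecture read with a universal quantifier over $Q$ is false as literally stated.

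The usable content of the conjecture (and the way the paper actually exploits it) concerns maxima, not minima: for every pattern $q$ of length $k$ there should exist a layered $Q$ of length $k$ with $S_n(q)\le S_n(Q)$, i.e., the maximum of $S_n$ over all length-$k$ patterns is attained at a layered pattern. Under that reading the relevant extremal layered pattern is the conjectured \emph{maximizer} $M_k$ of Conjecture \ref{strongerconj}, not the minimizer $12\cdots k$, and the paper's contribution is the unconditional bound $S_n(M_k)\le(2.25k^2)^n$ obtained by an explicit injection into pairs of words --- it does not attempt to prove Conjecture \ref{layered} itself. If you wish to pursue your program, your step 1 (layer-splitting monotonicity, $S_n(12\cdots k)\le S_n(Q)$ for layered $Q$) is a legitimate open question whose asymptotic version is the content of \cite{integer}, but it addresses the wrong end of the inequality for this conjecture, and no injection of the kind you describe is known.
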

If Conjecture \ref{layered} holds, then any upper bound that we can prove
for all {\em layered}
 patterns of length $k$ is also an upper bound for {\em all}
patterns of length $k$. This has motivated several attempts
 to find a constant $CL_k$
so that $S_n(Q)\leq CL_k^n$ for all layered patterns $Q$ of length $k$. 
It follows from results in \cite{tenacious}, \cite{integer} and
\cite{records} that $CL_k\leq O(2^k)$. A much stronger, recent result
of Anders Claesson, Vit Jelinek and Einar Steingr\'{\i}msson \cite{claesson}
shows that $CL_k\leq 4k^2$ holds.

Let $M_{2m}$ denote the pattern $132\cdots (2m-1)(2m-2)2m$, and let 
$M_{2m-1}$ denote the pattern obtained from $M_{2m}$ by removing the first
entry and then relabeling. That is, $M_{2m-1}=21\cdots (2m-2)(2m-3)(2m-1)$.
So for instance, $M_3=213$, and $M_4=1324$, while $M_5=21435$, and $M_6=132546$.
A different version of Conjecture \ref{layered}, also supported by numerical
 evidence,  is the following. 
\begin{conjecture} \label{strongerconj}
Let $m\geq 2$. Then 
\begin{enumerate} \item[(A)]
for all positive integers $n$, and for all patterns $q$ of length $2m-1$, the
inequality 
\[ S_n(q)\leq S_n(M_{2m-1}) \] holds, and
\item[(B)] for all positive integers $n$, and for all patterns $q$ of length
 $2m$, the
inequality 
\[ S_n(q)\leq S_n(M_{2m}) \] holds.
\end{enumerate}
\end{conjecture}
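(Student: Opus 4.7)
The plan is to attempt Conjecture \ref{strongerconj} by a two-stage reduction, proceeding by induction on $m$. In the first stage, I would reduce from arbitrary patterns to layered patterns, essentially proving Conjecture \ref{layered} in parallel. Given a non-layered pattern $q$ of length $k$, I would locate an occurrence of $231$ or $312$ inside $q$ itself and perform a local modification: swap two adjacent values of $q$ so as to reduce the number of non-layered witnesses, then construct an injection $S_n(q) \hookrightarrow S_n(q')$ for the modified pattern $q'$. Iterating this straightening eventually produces a layered pattern $Q$ of length $k$ with $S_n(q) \leq S_n(Q)$, reducing the problem to comparing $S_n(Q)$ with $S_n(M_k)$.

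In the second stage, I would parameterize layered patterns of length $k$ by their compositions of layer sizes $(c_1, \ldots, c_r)$ with $\sum c_i = k$. The target patterns $M_{2m}$ and $M_{2m-1}$ correspond to the specific compositions $(1,2,2,\ldots,2,1)$ and $(2,2,\ldots,2,1)$: almost every part equals $2$, with singleton corrections at the ends. The desired claim is then a composition-level monotonicity statement: any local move that pushes the composition of $Q$ toward one of these two target shapes --- splitting a part $c \geq 3$ as $(c-2, 2)$, or merging two adjacent singletons into a pair --- weakly increases $S_n$. Iterating these local moves would transform any composition into the $M_k$ composition, finishing part (A) or part (B) as appropriate.

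The main obstacle is the injection $S_n(L) \hookrightarrow S_n(L')$ required whenever two layered patterns $L, L'$ differ by one such local move on the composition. The available machinery --- the Marcus--Tardos argument, Cibulka's refinement to $2^{O(k \log k)}$, and the Claesson--Jelinek--Steingr\'{\i}msson bound $CL_k \leq 4k^2$ --- is global and not sensitive enough to distinguish $S_n(L)$ from $S_n(L')$ for two layered patterns of the same length. Any workable injection would have to be constructed by hand, using the specific location of the composition change and the decomposition of $L'$-avoiders into blocks dictated by occurrences of the shorter layers of $L$. I expect this local comparison of layered-pattern avoidance counts to be the genuine sticking point, and it is the same obstacle that has prevented a general proof of Conjecture \ref{layered} in the literature; for this reason I suspect the paper does not prove Conjecture \ref{strongerconj} outright but instead establishes the quantitative bound $S_n(M_k) \leq (2.25 k^2)^n$ promised in the abstract.
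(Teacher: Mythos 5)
The statement you were asked to prove is Conjecture \ref{strongerconj}; the paper does not prove it. It is stated purely as a conjecture supported by West's numerical evidence, and the paper's actual contribution is the unconditional bound $S_n(M_k)\leq (2.25k^2)^n$ (via Lemma \ref{inducl} and the counting in Propositions \ref{upperw} and \ref{upperv}), which yields $c_k\leq 2.25k^2$ only \emph{conditionally} on the conjecture. So your closing suspicion is exactly right, and there is no proof in the paper against which to compare your plan.

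As for the plan itself: both stages rest on injections that are not known to exist. The first stage (straightening a non-layered pattern by swapping two adjacent values and injecting $Av_n(q)$ into $Av_n(q')$) is essentially Conjecture \ref{layered} itself; no such swap-by-swap injection is known, and a single local modification of a pattern can move $S_n$ in either direction --- the refutation of Arratia's conjecture in \cite{albert} illustrates how delicately $S_n$ depends on the pattern even within a fixed length. The second stage, monotonicity of $S_n$ under local moves on the composition of layer sizes toward $(1,2,\ldots,2,1)$ or $(2,\ldots,2,1)$, is likewise open; the results in \cite{integer}, \cite{records}, and \cite{claesson} give growth-rate bounds for layered classes but not term-by-term comparisons between two layered patterns of the same length. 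You have correctly identified the layer compositions of $M_{2m}$ and $M_{2m-1}$ and correctly located the obstacle, but the proposal is a research program rather than a proof, and the conjecture remains exactly that.
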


In this paper, we will prove that if $k=2m-1$, then the inequality
 $S_n(M_{2m-1})\leq
(2.25k^2)^n$ holds, and if $k=2m$, then the inequality $S_n(M_{2m})\leq 
(2.25k^2)^n$ holds. This means that if Conjecture \ref{strongerconj} holds,
 then 
$c_k\leq 2.25k^2$. (In fact, we prove a slightly stronger upper bound for
 $c_k$.)
Note that $c_k\geq (k-1)^2$ has been known since Amitaj Regev's paper
 \cite{regev}. 

Our proof will be inductive, but even the initial case of our induction will
 depend on 
a result that has only been recently proved. 

\section{Composing and Decomposing Patterns}

The following useful definitions describe two simple but crucial ways in
 which  patterns can be composed.

\begin{definition}
Let $q$ be a pattern of length $k$ and let $t$ be a pattern of length $m$.
Then $q\oplus t$ is the pattern of length $k+m$ defined by
\[ (q\oplus t)_i= \left\{ \begin{array}{l@{\ }l}
q_i  \hbox{  if $i\leq k$},\\
\\
 t_{i-k} +k \hbox{  if $i>k$. }
\end{array}\right.
\]
\end{definition}

In other words, $q\oplus t$ is the concatenation of $q$ and $t$ so that 
all entries of $t$ are increased by the size of $q$.

\begin{example}
If $q=3142$ and $t=132$, then $q\oplus t=3142576$.
\end{example}

\begin{definition}
Let $q$ be a pattern of length $k$ and let $t$ be a pattern of length $m$.
Then $q\ominus t$ is the pattern of length $k+m$ defined by
\[ (q\ominus t)_i= \left\{ \begin{array}{l@{\ }l}
q_i + m \hbox{  if $i\leq k$},\\
\\
 t_{i-k} \hbox{  if $i>k$. }
\end{array}\right.
\]
\end{definition}

In other words, $q\ominus t$ is the concatenation of $q$ and $t$ so that 
all entries of $q$ are increased by the size of $t$.

\begin{example}
If $q=3142$ and $t=132$, then $q\ominus t=6475132$.
\end{example}

The following strong theorem of Claesson, Jelinek and Steingr\'{\i}msson
 describes an important way in which 
permutations avoiding a  long given pattern of a specific kind can be 
decomposed into two permutations, each of which avoids a shorter pattern.

\begin{theorem} \cite{claesson} \label{Claesson}
Let $\sigma$, $\tau$, and $\rho$ be three permutations.
Let $p$ be a permutation that avoids $\sigma \oplus (\tau \ominus 1) \oplus
\rho$. Then it is possible to color each entry of $p$ red or blue so that
the red entries of $p$ form a $\sigma \oplus (\tau \ominus 1)$-avoiding
permutation and the blue entries of $p$ form a $(\tau \ominus 1) \oplus
\rho$-avoiding permutation.
\end{theorem}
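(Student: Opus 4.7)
For brevity write $\alpha = \sigma \oplus (\tau\ominus 1)$ and $\beta = (\tau\ominus 1) \oplus \rho$, so the forbidden pattern is both $\alpha \oplus \rho$ and $\sigma \oplus \beta$. The patterns $\alpha$ and $\beta$ share the common factor $\tau\ominus 1$, appearing at the right end of $\alpha$ and at the left end of $\beta$. The key structural idea is that whenever an $\alpha$-copy and a $\beta$-copy in $p$ overlap suitably along this common factor, they glue into a copy of $\sigma\oplus(\tau\ominus 1)\oplus\rho$, which the hypothesis on $p$ forbids. Any obstruction to a valid two-coloring should therefore be convertible into such a gluing, yielding a contradiction.

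My plan is to produce the coloring by a greedy left-to-right scan of $p$. Process the entries in order of position; tentatively color each new entry $x$ red, and recolor it blue only if its red label would complete a red copy of $\alpha$. The red subsequence then avoids $\alpha$ by construction, so the content of the theorem is to verify that the blue subsequence also avoids $\beta$. Suppose it does not, and let $x$ be the first entry whose blue label completes a blue copy $B$ of $\beta$. Scanning left to right forces $x$ to be the rightmost entry of $B$, placing $x$ inside $B$'s $\rho$-block. The greedy rule also forces $x$ to be blue because a red label on $x$ would complete a red copy $A$ of $\alpha$; in $A$, the entry $x$ plays the role of the ``small $1$'' at the end of the $(\tau\ominus 1)$-tail. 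From here one extracts a $\sigma$-pattern from $A$, a $(\tau\ominus 1)$-copy from either $A$ or $B$, and a $\rho$-pattern from $B$, and attempts to assemble them into a copy of $\sigma\oplus(\tau\ominus 1)\oplus\rho$ inside $p$.

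The main obstacle is precisely this gluing step. The entry $x$ is forced to play conflicting roles in $A$ and $B$---the minimum of $A$'s $(\tau\ominus 1)$-tail on the one hand, and a member of $B$'s $\rho$-block on the other---and the pattern $\sigma\oplus(\tau\ominus 1)\oplus\rho$ requires $\rho$ to sit strictly to the right of $\tau\ominus 1$, so $x$ cannot be reused. One possible resolution is to take $B$'s own $(\tau\ominus 1)$-prefix as the middle factor and then check that it lies, both position-wise and value-wise, between $A$'s $\sigma$-part and $B$'s $\rho$-part; another is to modify the greedy rule so that the alignment is automatic, for instance by breaking ties by value, or by selecting the rightmost rather than the leftmost obstruction. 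Turning one of these strategies into an honest subpattern of $p$, rather than merely an informal gluing, is the technical heart of the theorem and the place where the real cleverness of Claesson, Jelinek, and Steingr\'{\i}msson is required.
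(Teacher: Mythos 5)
There is a genuine gap here, and you in fact flag it yourself: the entire content of the theorem is the verification that the blue subsequence avoids $\beta=(\tau\ominus 1)\oplus\rho$, and your proposal stops exactly at that point, describing the gluing obstacle and then remarking that resolving it ``is the technical heart of the theorem.'' A plan plus an acknowledged obstacle is not a proof. (For what it is worth, the paper itself only defines the coloring and defers the verification to Claesson--Jelinek--Steingr\'{\i}msson; but what you were asked to supply is precisely that verification.)

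More concretely, your coloring rule is not the one that makes the argument work. The canonical coloring has \emph{three} rules: (1) color $p_i$ blue if coloring it red would create a red copy of $\sigma\oplus(\tau\ominus 1)$; (2) color $p_i$ blue if $p_i$ is larger than some blue entry to its left; (3) otherwise color $p_i$ red. You keep only rule (1). Rule (2) is not cosmetic: it propagates blueness upward and to the right, which is what lets one align the $\rho$-part of a putative blue $\beta$-copy \emph{above} (in value) the $\tau\ominus 1$-block of the red $\alpha$-witness, so that the two can be concatenated into an honest copy of $\sigma\oplus(\tau\ominus 1)\oplus\rho$. Without it, the value comparison you need in your ``gluing step'' simply is not available, which is exactly the conflict you ran into. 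Note also that the rest of the paper relies on rule (2) specifically: the proofs of Lemma \ref{newbound} and of part (b) of Lemma \ref{inducl} both invoke ``by the second rule of canonical coloring, a blue entry cannot be immediately followed by a larger red entry.'' So even if your simplified greedy coloring happened to satisfy the conclusion of the theorem (which you have not shown, and which I would not expect without rule (2)), it would not be the coloring whose properties the later arguments depend on.
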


\begin{example} \label{easyex}
Let $\sigma$, $\tau$, and $\rho$ each be the one-element pattern 1. 
Then Theorem \ref{Claesson} says that it is possible to color the entries
of  a 1324-avoiding permutation red or blue so that the red entries
form a 132-avoiding permutation and the blue entries form a 213-avoiding
permutation. 
\end{example}

\begin{proof} (of Theorem \ref{Claesson})
Color the entries of $p$ one by one, going left to right, according to the
following three rules.
\begin{enumerate}
\item If coloring $p_i$ red creates a red copy of 
$\sigma \oplus (\tau \ominus 1)$, then color $p_i$ blue. 
\item If $p_i$ is larger than a blue entry on its left, then color
$p_i$ blue. 
\item Otherwise color $p_i$ red.
\end{enumerate}
It is then proved in \cite{claesson} that this coloring has the required
properties. 
\end{proof}

\begin{definition} The coloring defined in the preceding proof is called
the {\em canonical} coloring of $p$ (with respect to $\sigma$, $\tau$, and
$\rho$). 
\end{definition}

\begin{example} Let  $\sigma$, $\tau$, and $\rho$ each 
be the one-element pattern as
in Example \ref{easyex}, and let $p=3612745$. Then $p$ is a 1324-avoiding
 permutation.
In the canonical coloring of $p$ with respect to $\sigma$, $\tau$, and $\rho$, 
the red entries are 3, 6, 1, 2, and 7, while the blue entries are 4 and 5.
 It is easy
to verify that the string of red entries avoids 132, while the string of
 blue entries avoids
213. 
\end{example}

\section{An Inductive Argument}
In this section, we will present an inductive argument that shows how 
$M_{2m-1}$-avoiding and $M_{2m}$-avoiding permutations can be injectively
mapped into  pairs of certain words.  The precise statement will be
made in Lemma \ref{inducl}. Even the initial steps of this argument
are not obvious. The argument for $M_3$-avoiding permutations (that is, 
213-avoiding permutations), is not surprising. The argument for $M_4$-avoiding
permutations has only been recently found \cite{newbound}. These two arguments
are given in Section \ref{initial}, before the general result is announced 
in Section
\ref{inductive}.

\subsection{The Initial Steps} \label{initial}
Let $p=p_1p_2\cdots p_n$. We say that $p_i$ is a {\em right-to-left maximum}
in $p$ if it is larger than all entries on its right, that is, 
if $p_i>p_j$ for all $j>i$. We always consider $p_n$ a right-to-left maximum,
since the condition is vacuously true for that entry.

Let $V_2(n)$ be the set of all words of length $n$ over the alphabet $\{0,1\}$.
\begin{proposition} \label{catalan}
Let $p=p_1p_2\cdots p_n\in Av_n(213)$,  let $v(p_i)=0$ if $p_i$ is not a
right-to-left maxium, and let $v(p_i)=1$ if $p_i$ is a right-to-left
maximum. Set 
\[v(p)=v(p_1)v(p_2)\cdots v(p_n)\] and \[v'(p)= v(1)v(2)\cdots v(n).  \]
Then the map $f_3:Av_n(213)\rightarrow V_2(n)\times V_2(n)$ defined by
$f(p)=(v(p),v'(p))$ is injective. 
\end{proposition}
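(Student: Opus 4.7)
The plan is to show $f_3$ is injective by reconstructing $p$ uniquely from $(v(p),v'(p))$. From $v(p)$ I would read off the positions $i_1<i_2<\cdots<i_r$ of the right-to-left maxima of $p$, and from $v'(p)$ their values $j_1<j_2<\cdots<j_r$. In any permutation, the right-to-left maxima read from left to right strictly decrease in value, because each is greater than every entry on its right, including the next one. So the canonical pairing $p_{i_k}=j_{r+1-k}$ is forced, and the entries of $p$ at the right-to-left maximum positions are determined.

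It remains to place the non-right-to-left-maximum values. Partition the non-RL-max positions into \emph{blocks} $B_0,\ldots,B_{r-1}$, where $B_l$ consists of the positions strictly between $i_l$ and $i_{l+1}$ (with $i_0:=0$). Call $j_{r-l}=p_{i_{l+1}}$ the \emph{ceiling} of $B_l$. I would establish two consequences of 213-avoidance: (i) Within each block $B_l$ the entries of $p$ strictly increase; otherwise two positions $a<b$ in $B_l$ with $p_a>p_b$, combined with the ceiling $i_{l+1}$, would form a 213-pattern, since $p_a<j_{r-l}=p_{i_{l+1}}$. (ii) For an earlier block $B_{l'}$ and a later block $B_l$ (so $l'<l$), every value in $B_{l'}$ that is strictly less than $j_{r-l}$ must also be strictly less than every value in $B_l$; otherwise a position $a$ in $B_{l'}$ carrying a value $u<j_{r-l}$, a position $b$ in $B_l$ carrying a value $u'<u$, together with the ceiling position $i_{l+1}$, yield the 213-pattern $(a,b,i_{l+1})$.

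To conclude that the distribution of non-RL-maximum values among blocks is forced, I would apply (ii) via a symmetric-difference argument. Suppose two valid distributions $D$ and $D'$ agreed on all blocks $B_l$ with $l>l^*$ but differed on $B_{l^*}$. Pick $u\in B_{l^*}(D)\setminus B_{l^*}(D')$; under $D'$ the value $u$ must lie in some earlier block, and (ii) applied in $D'$ forces $u<\min B_{l^*}(D')$. Symmetrically, any $u'\in B_{l^*}(D')\setminus B_{l^*}(D)$ satisfies $u'<\min B_{l^*}(D)$. Combining these with $u\in B_{l^*}(D)$ and $u'\in B_{l^*}(D')$ produces the chain $u<u'<u$, a contradiction. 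Hence the value-to-block assignment is unique, and together with (i) this determines every entry of $p$.

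The main obstacle will be the cross-block step (ii) together with the symmetric-difference argument: the within-block claim follows from a single 213-witness through the ceiling, but ruling out alternative value-to-block assignments requires the more delicate cross-block 213-consequence and the comparison of the two hypothetical distributions.
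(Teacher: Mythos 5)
Your proof is correct, and its first half (recovering the positions and values of the right-to-left maxima from $v$ and $v'$, and pairing them in decreasing order) is identical to the paper's. The two arguments diverge only in how they show the non-maximum entries are forced: the paper runs a greedy reconstruction, inserting the remaining entries right to left and arguing that one must always take the largest ``eligible'' value, since choosing $x$ when $y>x$ was eligible creates a 213-pattern with the next right-to-left maximum as the ``3''; you instead give a static structural characterization via the blocks between consecutive maxima --- entries increase within each block, and the value-to-block assignment is pinned down by an exchange argument comparing two hypothetical preimages. Both arguments hinge on the same 213-witness (two out-of-order entries capped by a right-to-left maximum), so this is a difference of organization rather than of idea; your version is longer but makes explicit what ``eligible'' means and why the reconstruction cannot branch, whereas the paper's greedy phrasing is terser and leaves that partly implicit. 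One small point you should make explicit: in step (i) you use that every entry of $B_l$ is smaller than its ceiling $p_{i_{l+1}}$; this is true (the maximum over positions $\geq a$ sits at a right-to-left maximum position, which is at or to the right of $i_{l+1}$, and the maxima decrease left to right), but it deserves a line of justification since both (i) and the applicability of (ii) in the exchange step rest on it.
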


\begin{example} If $p=35412$, then $v(p)=01101$, and so $v'(p)=
01011$. Therefore, $f(35412)=(01101,01011)$.
\end{example} 

\begin{proof}
Let $(v,v')\in V_2(n)\times V_2(n)$, and let us assume that there is a
permutation $p\in Av_n(213)$ so that $f_3(p)=(v,v')$. Then the positions
of the 1s in $v$ reveal the positions in which $p$ must have right-to-left
maxima, and the positions of 1s in $v'$ reveal which entries of $p$ are
right-to-left maxima. It follows from the definition of right-to-left maxima
that the right-to-left maxima of $p$ must be in decreasing order from 
left to right. 

Once the right-to-left maxima of $p$ are in place, there is only one 
way to insert the remaining entries in the remaining slots. Indeed, 
going from right to left, in each step we must place the largest eligible
remaining entry (that is, the largest one whose insertion does not change
the set of right-to-left maxima). Indeed, if at some point during this procedure
we placed an entry $x$ instead of the eligible entry $y>x$, then 
$y$, $x$, and the closest right-to-left minimum on the right of $x$ would form
a 213-pattern.
\end{proof}

Note that by trivial arguments based on symmetry, analogous results
can be proved for 312-avoiding, 231-avoiding, and 132-avoiding permutations.
(We will actually use that last class.)

Also note that $v'(p)$ is nothing but the letters of the word $v(p)$ rearranged according
to the {\em inverse} $p^{-1}$ of $p$. That is, if $p^{-1}=p_{a_1}p_{a_2}\cdots p_{a_n}$, 
then $v'(p)=v(p^{-1})=v(p_{a_1})v(p_{a_2})\cdots v(p_{a_n})$.  

We say that a word $w$ over a finite alphabet has an {\em $XY$-factor} if there
is a letter $X$ in $w$ that is immediately followed by a letter $Y$. For instance,
the word 00011120 has no 02-factors. Let $p=p_1p_2\cdots p_n$. We say that $p_i$
is a {\em left-to-right minimum} if $p$ if $p_i<p_j$ for all $j<i$. In other words, a left-to-right
minimum is an entry that is less than everything on its left. 

Let $W_4(n)$ be the set of all words of length $n$ over the
alphabet $\{1,2,3,4\}$ that contain no 32-factors.  The following is a recent result of 
the present author. 

\begin{lemma} \cite{newbound} \label{newbound}
 Let $p=p_1p_2\cdots p_n\in Av_n(1324)$. Consider the 
canonical decomposition of $p$ into a 132-avoiding permutation of
red entries and a 213-avoiding permutation of blue entries as given
in Theorem \ref{Claesson}. Furthermore, define the word $w(p)$ as follows.
\begin{itemize}
\item If $p_i$ is a red entry that is a left-to-right minimum in the string
of red entries, let $w(p_i)=1$,
\item if $p_i$ is a red entry that is not a left-to-right minimum in the string
of red entries, let $w(p_i)=2$,
\item if $p_i$ is a blue entry that is not a right-to-left maximum in the string
of red entries, let $w(p_i)=3$, and
\item if $p_i$ is a blue entry that is a right-to-left maximum in the string
of blue entries, let $w(p_i)=4$.
\end{itemize} 
 Set 
\[w(p)=w(p_1)w(p_2)\cdots w(p_n)\] and \[w'(p)= w(1)w(2)\cdots w(n).  \]
Then the map $f_4:Av_n(1324)\rightarrow W_4(n)\times W_4(n)$ defined by
$f_4(p)=(w(p),w'(p))$ is injective. 
\end{lemma}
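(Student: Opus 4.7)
The plan is to adapt the strategy of Proposition \ref{catalan} to the two-color setting afforded by Theorem \ref{Claesson}. With $\sigma=\tau=\rho=1$, the canonical coloring decomposes $p\in Av_n(1324)$ into a red subsequence avoiding $1\oplus 21=132$ and a blue subsequence avoiding $21\oplus 1=213$. The letters $\{1,2\}$ of $w(p)$ record the position-level structure of the red subsequence, the letters $\{3,4\}$ record that of the blue subsequence, and $w'(p)$ records the same information indexed by \emph{value} rather than position. There are two things to prove: (a) both $w(p)$ and $w'(p)$ lie in $W_4(n)$, i.e., contain no 32-factor; and (b) the pair $(w(p),w'(p))$ determines $p$.

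For (a), a 32-factor in $w(p)$ would say that a blue $p_i$ which is not a right-to-left maximum among the blue entries is immediately followed by a red $p_{i+1}$ which is not a left-to-right minimum among the red entries. I would derive a contradiction by combining three ingredients: the larger blue entry to the right of $p_i$ witnessing that $p_i$ is not a blue right-to-left maximum; the smaller red entry to the left of $p_{i+1}$ witnessing that $p_{i+1}$ is not a red left-to-right minimum; and the reason the canonical coloring made $p_i$ blue in the first place (either rule (1) produced a forbidden red 132, or rule (2) exhibited a smaller blue entry on its left). A short case analysis should either reveal a red copy of 132 already present without $p_i$ (so rule (1) could not have been the reason $p_i$ was painted blue), or assemble an actual 1324-pattern in $p$. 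The corresponding claim for $w'(p)$ then follows by running the same argument on $p^{-1}$, using that 1324 is fixed under inversion.

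For (b), suppose $(w,w')$ lies in the image of $f_4$. The letters of $w$ partition the positions of $p$ into red and blue, and the letters of $w'$ partition the values of $p$ into red and blue; consistency is automatic since a red position must receive a red value. To reconstruct the red subsequence, observe that the positions marked $1$ in $w$ are exactly the slots of left-to-right minima of this subsequence, and the values marked $1$ in $w'$ are exactly those minima; since left-to-right minima of any sequence are strictly decreasing, the assignment of minimum-values to minimum-positions is forced. The remaining red entries are then pinned down by the 132-avoidance of the red subsequence via the reverse-complement form of Proposition \ref{catalan} (reverse-complement maps 132-avoiders to 213-avoiders and swaps left-to-right minima with right-to-left maxima). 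The blue subsequence is reconstructed directly from Proposition \ref{catalan} using its 213-avoidance and the right-to-left maxima information encoded by the letters $3$ and $4$. Interleaving the two reconstructions according to the color pattern of $w$ recovers $p$.

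The main obstacle I expect is step (a); the injectivity argument in (b) is a fairly mechanical doubling of Proposition \ref{catalan}, but controlling the canonical coloring tightly enough to forbid a 32-factor is what makes the restricted alphabet $W_4(n)$ viable, and this restriction is exactly what drives the quantitative improvement the refined bookkeeping is designed to deliver.
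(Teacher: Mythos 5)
Your overall architecture matches the paper's: show both words avoid 32-factors, then reconstruct the two monochromatic subsequences from the position/value data via (symmetries of) Proposition \ref{catalan}. Part (b) of your sketch is essentially the paper's injectivity argument and is fine, including the reverse-complement reduction for the red 132-avoiding string.

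The gap is in part (a), and it is exactly the step you flagged as uncertain. Your proposed case analysis interrogates why $p_i$ was colored \emph{blue}, but knowing the reason $p_i$ is blue (rule (1) versus rule (2)) tells you nothing about the relative order of $p_i$ and $p_{i+1}$, and without that order the four entries you have collected need not form a 1324-pattern. The lever that actually works is the reason $p_{i+1}$ stayed \emph{red}: since $p_i$ is blue and sits immediately to the left of $p_{i+1}$, rule (2) would have forced $p_{i+1}$ to be blue if $p_{i+1}>p_i$; as $p_{i+1}$ is red, we must have $p_{i+1}<p_i$. Now take your two witnesses --- a red $p_j$ with $j<i$ and $p_j<p_{i+1}$ (because $p_{i+1}$ is not a red left-to-right minimum) and a blue $p_\ell$ with $\ell>i+1$ and $p_\ell>p_i$ (because $p_i$ is not a blue right-to-left maximum) --- and $p_j\,p_i\,p_{i+1}\,p_\ell$ is a 1324-pattern outright; no case analysis is needed. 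Separately, be careful with the claim that the $w'(p)$ case ``follows by running the same argument on $p^{-1}$'': the canonical coloring is defined by a left-to-right greedy scan and is not equivariant under inversion, so you cannot simply transport the coloring to $p^{-1}$. What does work is the direct analogue: if the values $i$ (blue, not a blue right-to-left maximum, at position $a$) and $i+1$ (red, not a red left-to-right minimum, at position $b$) produce a 32-factor in $w'$, the same consequence of rule (2) forces $b<a$, and the corresponding witnesses again assemble a 1324-pattern.
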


\begin{example} If $p=3612745$, then we get $w(p)=1212234$, and $w'(p)=1213424$, 
and we can easily see that neither $w(p)$ nor $w'(p)$ contains a 32-factor. 
\end{example}

\begin{proof} (of Lemma \ref{newbound})
Let $f_4(p)=(w(p),w'(p))$, and let us assume that $w(p)$ contains a 
32-factor, that is, there exists an index $i$ so that $w(p_i)=3$ and
$w(p_{i+1})=2$. That means that in particular, $p_i$ is blue and 
$p_{i+1}$ is red, so by the second rule of canonical coloring (as 
given in Theorem \ref{Claesson}), $p_i>p_{i+1}$. As $p_{i+1}$ is not
a left-to-right minimum, there is an entry $p_j$ with $j<i$ so that
$p_j<p_{i+1}$. Similarly, as $p_i$ is not a right-to-left maximum, 
there is an entry $p_\ell$ with $\ell>i+1$ so that $p_\ell >p_i$.
However, that means that $p_jp_ip_{i+1}p_\ell$ is a 1324-pattern, which is
a contradiction. An analogous argument (see \cite{newbound})
 shows that $w'(p)$ also avoids 1324. So $f_4$ indeed maps into
$W_4(n)\times W_4(n)$.

In order to see that $f_4$ is injective, we proceed in a way that is similar
to (but slightly more complex than) the way in which we proceeded in the
proof of Proposition \ref{catalan}. Let $(w,w')\in W_4(n)\times W_4(n)$,
and let us assume that there exists $p\in Av_n(1324)$ so that $f_4(p)=(w,w')$.
Then the positions of 3s and 4s in $w$ reveal {\em where} the blue entries
of $p$ are, and the positions of 3s and 4s in $w'$ reveal {\em what}
the blue entries of $p$ are. By Proposition \ref{catalan}, this is sufficient
information to recover the entire string of blue entries, since that string
is a 213-avoiding permutation. A dual argument works for the string of red
entries, since the red entries form a 132-avoiding permutation.
\end{proof}

\subsection{The Induction Step} \label{inductive}
In this part of our proof, we will often obtain an encoding of a long
permutation by partitioning it into two parts, encoding each part by 
disjoint alphabets, then combining the two images into one. The following
definition makes this concept more precise. If $s$ is a substring of the 
permutation $p$, let $|s|$ denote the length (number of entries) of $s$. 

\begin{definition} \label{merge}
Let $p=p_1p_2\cdots p_n$ be a permutation, and let $p'$ and $p''$ be two  
substrings
of $p$ so that each entry of $p$ belongs to exactly one of $p'$ and $p''$.

Let us assume that $a(p')$ and $a'(p')$ are words of length $|p'|$ over a
finite alphabet $A$, and $b(p'')$ and $b'(p'')$ are words of length 
$|p''|$ over a finite alphabet $B$ that is disjoint from $A$.

 Then the {\em merge} of 
$a(p')$ and $b(p'')$ is the word $w(p)=w_1w_2\cdots w_n$ of length $n$ 
over the finite alphabet $A\cup B$ whose $i$th letter $w_i$ is obtained as 
follows. 
\begin{enumerate}
\item If $p_i$ is 
the $r$th letter $p'$, then $w_i$ is equal to the $r$th letter
 of $a(p')$, and 
\item if $p_i$ is the $r$th letter of $p''$, then $w_i$ is 
the $r$th letter of $b(p'')$. 
\end{enumerate}

Furthermore, the merge of $a'(p')$ and $b'(p'')$ is the word
$w'(p)=w_1'w_2'\cdots w_n'$ obtained as follows.
\begin{enumerate}
\item If the entry $i$ of $p$ is the $t$th smallest entry 
 of $p'$, then $w_i'$ is equal to 
the $t$th letter of $a'(p')$, and,
\item if the entry $i$ of $p$ is the $t$th smallest entry 
 of $p''$, then $w_i'$ is equal to 
the $t$th letter of $b'(p'')$.
\end{enumerate}
\end{definition}

\begin{example}
Let $p=178942365$, let $p'=12$, and let $p''=7894365$. 
Let $a(p')=11$, and let $a(p'')=11$.
Furthermore, let $b(p'')=2222233$, and let $b'(p'')=2233222$.

Then we have $w(p)=122 221 233$ and $w'(p)=112233222$.
\end{example}

The following definition extends the notion of merges from words to functions in a 
natural way. 

\begin{definition} \label{mergef}
Let $p$, $p'$ and $p''$ be as in Definition \ref{merge}, and let $f(p')=(v_1,v_2)$, and
$g(p'')=(w_1,w_2)$, where the $v_i$ are words over the finite alphabet $A$, and the
$w_i$ are words over the finite alphabet $B$ that is disjoint from $A$. Then we say
that the function $h$ is the merge of $f$ and $g$ if $h(p)=(z_1,z_2)$, where $z_1$
is the merge of $v_1$ and $w_1$, and $z_2$ is the merge of $v_2$ and $w_2$.  
\end{definition}

\begin{example} \label{mergefex}
Let $p=687912435$, let $p'=612$, and let $p''=879435$. Let $f(p')=(000,000)$, and
let $g(p'')=112112$. If $h$ is the merge of $f$ and $g$, then $h(p)=(011200112,
001120112)$.
\end{example}

Recall that $M_{2m}$ denotes the pattern $132\cdots (2m-1)(2m-2)2m$, and 
$M_{2m-1}$ denotes the pattern obtained from $M_{2m}$ by removing the first
entry and then relabeling. That is, $M_{2m-1}=2143\cdots (2m-2)(2m-3)(2m-1)$.
So $M_4=1324$, while $M_5=21435$, and $M_6=132546$. 

In order to make the statement and proof of the following lemma easier to follow, 
we make the following general remark about the {\em indices} used in the lemma. 
The lemma will describe injections from certain sets of $q$-avoiding permutations into 
sets of pairs of certain words. These injections will be denoted by $f_k$, where $k$
is the length of $q$.  The co-domains
of the injections $f_k$ will be denoted by $V_a$ or $W_a$, where $a$ denotes the length of
the words in the co-domain of the $f_k$.

\begin{lemma} \label{inducl}
 Let $m\geq 2$. Let $Av_n(M_{t})$ denote the set of all 
$M_{t}$-avoiding $n$-permutations.

\begin{enumerate} \item[(a)] Let $V_{3m-4}(n)$ denote
the set of all words of length $n$ over the alphabet
 $\{0,1,2,\cdots ,3m-5\}$ that do not have any $(3i)(3i-1)$-factors for any
$i\geq 1$. 

 Then there is an injection \[f_{2m-1}:Av_n(M_{2m-1})\rightarrow
 V_{3m-4}(n)\times V_{3m-4}(n).\] 

\item[(b)]  Let $W_{3m-2}(n)$ denote
the set of all words of length $n$ over the alphabet
 $\{1,2,\cdots ,3m-2\}$ that do not have any $(3i)(3i-1)$-factors for any
$i\geq 1$. 

 Then there is an injection \[f_{2m}:Av_n(M_{2m})\rightarrow
 W_{3m-2}(n)\times W_{3m-2}(n).\] 
\end{enumerate}
\end{lemma}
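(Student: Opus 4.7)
I would proceed by induction on $m \geq 2$. The base cases ($m=2$) are already given: part (a) is Proposition \ref{catalan} and part (b) is Lemma \ref{newbound}. Fix $m \geq 3$ and assume both parts hold for all smaller indices. Both inductive steps follow a common ``Claesson-decomposition-plus-merge'' template.

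For part (b), I would use the identity $M_{2m} = 132 \oplus M_{2m-3} = 1 \oplus (1 \ominus 1) \oplus M_{2m-3}$ and apply Theorem \ref{Claesson} with $\sigma = \tau = 1$ and $\rho = M_{2m-3}$. For any $p \in Av_n(M_{2m})$, the canonical coloring produces a red subsequence that avoids $132$ and a blue subsequence that avoids $(1 \ominus 1) \oplus M_{2m-3} = M_{2m-1}$. Encode red with two letters by the $132$-analog of Proposition \ref{catalan} (letter $1$ on left-to-right minima of red, letter $2$ otherwise), and encode blue via $f_{2m-1}$ from the inductive hypothesis. Shift the blue alphabet $\{0, \ldots, 3m-5\}$ by $3$ to the disjoint block $\{3, \ldots, 3m-2\}$ and form $f_{2m}(p) = (w(p), w'(p))$ using the merge of Definition \ref{mergef}.

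For part (a), for $m \geq 3$ I would use $M_{2m-1} = L_2 \oplus M_{2m-5} = 21 \oplus (1 \ominus 1) \oplus M_{2m-5}$ (setting $M_1 := 1$) and apply Theorem \ref{Claesson} with $\sigma = 21$, $\tau = 1$, $\rho = M_{2m-5}$. The canonical coloring of $p \in Av_n(M_{2m-1})$ produces a red subsequence avoiding $L_2 = 2143$ and a blue subsequence avoiding $M_{2m-3}$. Encode blue by $f_{2m-3}$ from the inductive hypothesis, shifting its alphabet $\{0, \ldots, 3m-8\}$ by $3$ to $\{3, \ldots, 3m-5\}$. Encode red with three letters $\{0, 1, 2\}$ using an auxiliary encoding of $2143$-avoiders, which I would establish separately by a direct extremal-entry argument in the spirit of Proposition \ref{catalan} (tracking both right-to-left maxima and left-to-right minima on the red subsequence). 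Combining via Definition \ref{mergef} yields $f_{2m-1}(p) = (w(p), w'(p))$ over $\{0, 1, \ldots, 3m-5\}$.

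It then remains to check, in both cases, that (i) the merged words lie in the stated target set (no $(3i)(3i-1)$-factor) and (ii) the map is injective. For (i), factors with $i \geq 2$ lie entirely within the shifted blue alphabet and are excluded by the inductive hypothesis applied to blue, since shifting by $3$ preserves the forbidden-factor structure. The only new factor to rule out is $32$: a blue letter $3$ (= shifted ``letter $0$'' of the blue encoding) immediately followed by a red letter $2$. As in the proof of Lemma \ref{newbound}, if such a factor appeared, rule (2) of the canonical coloring would force $p_i > p_{i+1}$; unpacking the telescoped meaning of ``letter $0$'' on the blue side provides entries on the right of $p_i$ that exceed it, and ``letter $2$'' on the red side provides a smaller red entry on the left of $p_{i+1}$. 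Together with $p_i p_{i+1}$, these assemble a forbidden copy of $M_{2m}$ (resp.\ $M_{2m-1}$) in $p$, a contradiction. Injectivity (ii) is then routine: the letter range identifies the red/blue partition of positions and values, after which red is reconstructed from its two- or three-letter encoding and blue from the inductive $f_{2m-1}^{-1}$ (resp.\ $f_{2m-3}^{-1}$). The main obstacle will be (i): carefully interpreting blue's ``letter $0$'' through the nested inductive structure and assembling the forbidden pattern, with the auxiliary $2143$-avoider encoding in part (a) as a secondary task.
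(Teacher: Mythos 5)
Your part (b) is essentially the paper's part (b): the same application of Theorem \ref{Claesson} with $\sigma=\tau=1$ and $\rho$ a shorter $M$-pattern, the same two-letter encoding of the $132$-avoiding red string, and the same shift-by-three merge. Your part (a), however, diverges from the paper, and the divergence opens two genuine gaps.

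First, the paper does not use Theorem \ref{Claesson} in the odd case at all. It colors an entry of $p\in Av_n(M_{2m+1})$ green exactly when it is the leftmost entry of some occurrence of $M_{2m}$, notes that the remaining (yellow) entries form an $M_{2m}$-avoiding permutation to which $f_{2m}$ applies, assigns the single new letter $0$ to the green entries, and proves injectivity by showing that the green entries must be re-inserted right to left as ``the largest eligible entry'' (otherwise a copy of $M_{2m+1}$ appears). This adds only one letter. Your route instead requires an injection of $2143$-avoiding permutations into pairs of ternary words, deferred to ``a direct extremal-entry argument in the spirit of Proposition \ref{catalan}.'' That proposition works because a $213$-avoider is \emph{determined} by the positions and values of its right-to-left maxima; no analogous determination holds for $2143$-avoiders, and since $S_n(2143)=S_n(1234)$ has Stanley--Wilf limit $9$, an injection into $\{0,1,2\}^n\times\{0,1,2\}^n$ has essentially zero slack. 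This auxiliary claim is a substantial unproven lemma, not a routine adaptation.

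Second, and more damaging, the change propagates into part (b). The paper's argument ruling out a $32$-factor takes $p_j$ blue of type $3$ and $p_{j+1}$ red of type $2$; type $2$ supplies a smaller entry $p_d$ on the left, and type $3$ (i.e., type $0$ inside the blue string) supplies, \emph{by the green/yellow definition}, an entire occurrence $p_jP$ of $M_{2m}$ starting at $p_j$, whence $p_dp_jp_{j+1}P$ is a forbidden $M_{2m+2}$. In your scheme the blue letter $0$ is merely a letter of a recursive $2143$-encoding and carries no ``starts a long pattern to its right'' information, so nothing plays the role of $P$ and the contradiction cannot be assembled. The step you describe as ``unpacking the telescoped meaning of letter $0$'' is precisely where the proof breaks; the paper's choice of the green/yellow coloring in the odd case is what makes that letter carry the needed combinatorial content.
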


\begin{proof} 
We prove the statements by induction on $m$. For $m=2$, the statements
are true. Indeed, for $m=2$, statement (a) is just the statement of Proposition \ref{catalan}, 
and statement (b) is just the statement of Lemma \ref{newbound}. 

Now let us assume that the statements are true for $m$, and let us prove them
for $m+1$. 
\begin{enumerate}
\item[(a)] First, we prove statement (a).
 Let $p\in Av_{n}(M_{2m+1})$. Color all entries of $p$ that are the leftmost
entry of an $M_{2m}$-pattern in $p$ green, and color all other entries of
$p$ yellow. Then, by definition, the string of all yellow entries of $p$
forms an $M_{2m}$-avoiding permutation $p''$. By part (b) of our induction
 hypothesis, the map $f_{2m}$ injectively maps this permutation
$p''$ into a pair of words  $(w_1,w_2)\in W_{3m-2}(|p''|)\times W_{3m-2}(|p''|)$. 
In order to define the image $f_{3m-1}(p)\in V_{3m-1} \times V_{3m-1}$, simply 
mark all green entries of $p$ by the letter 0. Let $p'$ be the string of
all green entries, and, to keep consistency with Definition \ref{merge},
let $g(p')=(00\cdots 0, 00\cdots 0)$, where both strings of 0s are of length
$|p'|$. Then we define $f_{2m+1}(p)$ as the merge of $g(p')$ and $f_{2m}(p')$
as defined in Definition \ref{mergef}.

\begin{example} For $p=687912435$, the reader is invited to revisit Example
\ref{mergefex}. With our current terminology, $p'$ is the string of green entries, 
$p''$ is the string of yellow entries, and $h=f_5$.
\end{example}

It is clear that $f_{2m+1}(p)$ indeed does not have a $(3i)(3i-1)$-factor
for any positive integer $i$, since positive integers correspond
to yellow entries of $p$, and the string of yellow entries avoids all
such factors by the induction hypothesis. 

In order to show that  the map $f_{2m+1}$ is injective, let us assume that
$(v_1,v_2)\in  V_{3m-1}(n)\times V_{3m-1}(n)$  equals $f_{2m+1}(p)$ for
some $p$. Then the positions of the yellow entries of $p$ are easy to 
recover, since these are the positions in which $v_1$ has a positive value. 
Similarly, the values of the yellow entries can be recovered as the positions
in which $v_2$ has a positive entry. Once the place and values of the yellow
entries of $p$ are found, the order in which these yellow entries is unique
since the map $f_{2m}$ that is applied to the string of yellow entries 
is injective. So the injective property of $f_{2m}$ will be proved if we
can show that there is only one way to place the green entries into the
remaining slots.

Let us fill the remaining slots with the green entries going right to left.
We claim that in each step, we must insert the largest remaining green entry
that is eligible to go into the given position (that is, that will start
an $M_{2m}$-pattern if inserted there). Indeed, let us assume that in a
given position, we do not proceed as described. That is, both $x$ and $y$
are eligible to be inserted in a given position $P$, but we insert $x$, even if
$x<y$. The fact that both $x$ and $y$ are eligible to be inserted in $P$
means that they both will be the first entry of an $M_{2m}$-pattern if inserted
in $P$. Let these patterns be $xM$ and $yM'$. Then we will create an 
$M_{2m+1}$ pattern, namely the pattern $yxM'$ when we eventually insert $y$ 
somewhere on the left of $x$.  

\item[(b)] Now we prove statement (b). Let $p\in Av_{n}(M_{2m+2})$.
Then Theorem \ref{Claesson} (with  $\sigma =1$, $\tau=1$ and
$\rho =M_{2m-1}$)
 shows that it is possible
to color the entries of $p$ red or blue so that the red entries form a
132-avoiding permutation and the blue entries form an $M_{2m+1}$-avoiding
permutation. Let us consider the canonical  coloring that achieves this and is 
given  in the proof of Theorem \cite{claesson}. 

Now we encode the string $p'$ of all red entries of $p$ in a manner that is 
analogous to what we saw in Proposition \ref{catalan}. We can do so, since the
$p'$ is a 132-avoiding permutation. To be more precise, 
mark each entry of $p'$ that is a left-to-right minimum in $p'$ by the letter $1$. Mark all remaining letters of
$p'$ by the letter $2$.  Define the words $a(p')$ and $a'(p')$ as in Proposition \ref{catalan}.
That is, let $p'=p'_1p'_2\cdots p'_r$, and let $v(p'_i)=1$ if $p'_i$ is a left-to-right minimum
in $p'$, and let $v(p'_i)=2$ otherwise. Then set $a(p')=v(p'_1)v(p'_2)\cdots v(p'_r)$, and 
set $a'(p')=v(p'_{j_1})v(p'_{j_2})\cdots v(p'_{j_r})$, where $p_{j_1}<p_{j_2}<\cdots <p_{j_r}$.

The string $p''$ of blue entries of $p$ forms an $M_{2m+1}$-avoiding permutation,
 so as we have just
seen in the proof of  statement (a), the string $p''$ can be injectively mapped into
a pair of words $(b(p''),b'(p'') )\in V_{3m-1} \times V_{3m-1}$ by the function
$f_{2m+1}$. Shift these letters by three, that is, turn each letter $i$ into 
a letter $i+3$ for all $i$ in $b(p'')$ and $b'(p'')$. Finally, define
$f_{2m+2}(p)=(w,w')$, where $w$ is the merge of $a(p')$ and $b(p'')$, 
and $w'$ is the merge of $a'(p')$ and $b'(p'')$. 

It is clear that $f_{2m+2}(p)=(w,w')$
 is a pair of words of length $n$ over the
alphabet $\{1,2,\cdots ,n\}$. It directly follows from the induction hypothesis
that neither $w$ nor $w'$ can contain a $(3i)(3i-1)$-factor for $i>1$.
There remains to show that neither $w$ nor $w'$ can contain a 32-factor.
In order to see this, let us assume that $w$ contains a 32-factor in
its $j$th and $(j+1)$st positions. The {\em type} of an entry of a permutation is
just the letter it is mapped into by $f_{2m+2}$.
 That means that in particular, $p_{j}$ is blue and $p_{j+1}$ is red, 
so, by the second rule of canonical colorings (see the proof of Theorem
\ref{Claesson}), $p_{j}>p_{j+1}$, since a blue entry cannot be followed by
a larger red entry. As $p_{j+1}$ is of type 2, it is not a
left-to-right minimum, so there exists
an index $d<j$  so that $p_d<p_{j+1}$. As $p_j$ is of type 3, it is of type
0 in $p''$, so there is an $M_{2n}$-pattern $p_jP$ in $p''$, and so, in $p$, 
whose first entry is $p_{j}$. However, that means that $p_dp_jp_{j+1}P$ is an
$M_{2m+2}$-pattern in $p$, which is a contradiction. So $w$ cannot contain
a 32-factor, and in an analogous way, nor can $w'$. 

Finally, we must show that $f_{2m+2}$ is injective. By now, the method we
used should not come as a surprise. Given a pair of words $(w,w')\in V_{3m+1}(n)
 \times V_{3m+1}(n)$,
we can recover the set and positions of the red entries, and the set
of positions of the blue entries of $p$, since the red entries are the ones
that are of type 1 or 2. After this, it follows from Proposition \ref{catalan}
that we can recover the string of the red entries, and it follows from
part (a) of this Lemma that we can recover the string of the blue entries.
\end{enumerate}
\end{proof}

\section{Computing the Upper bounds}

All there is left to do in order to find upper bounds on the numbers $S_n(M_{2m})$ 
and $S_n(M_{2m-1})$ is to find upper bounds on the sizes of the sets into which
the relevant permutations can be injectively mapped. It would be straightforward to 
simply find an upper bound on the {\em exponential growth rate} of these sequences, but 
we will carry out the slightly more cumbersome (but conceptually not difficult) task
of finding upper bounds for the sequence in the sense we described in the introduction.
 
\begin{proposition} \label{upperw} For all integers $m\geq 2$, 
we have
\[|W_{3m-2}(n)| =C_1(m)  \cdot \beta_1^n+C_2(m)\beta_2^n ,\]
where $\beta_1 = \frac{3m-2+\sqrt{9m^2-16m+8}}{2}$ and $\beta_2= 
\frac{3m-2-\sqrt{9m^2-16m+8}}{2}$, while 
 $C_1=\frac{\beta_1}{\beta_1-\beta_2}$  and 
$C_2=\frac{\beta_2}{\beta_2-\beta_1}$. 
\end{proposition}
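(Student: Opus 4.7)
My plan is to set up a two-term linear recurrence for $f_n := |W_{3m-2}(n)|$ by a transfer-matrix argument that collapses thanks to the symmetry among the forbidden factors, and then to solve the recurrence in closed form. The forbidden factors in $W_{3m-2}(n)$ are precisely the $m-1$ disjoint ordered pairs $(3i)(3i-1)$ for $1\le i\le m-1$. This lets me partition the alphabet $\{1,\dots,3m-2\}$ into three types: $R=\{3,6,\dots,3m-3\}$ (possible left letter of a forbidden factor), $S=\{2,5,\dots,3m-4\}$ (possible right letter), and $T=\{1,\dots,3m-2\}\setminus(R\cup S)$ (unconstrained). Here $|R|=|S|=m-1$ and $|T|=m$.

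Writing $g_n(x)$ for the number of valid length-$n$ words ending in $x$, the one-step transition is
\[
g_{n+1}(y)=\sum_{x:\,xy\text{ allowed}} g_n(x)=\begin{cases} f_n & \text{if } y\in R\cup T,\\ f_n-g_n(3i) & \text{if } y=3i-1\in S.\end{cases}
\]
I would then prove by induction on $n$ that $g_n(x)$ depends only on the type of $x$: for every $n\ge 2$, $g_n(x)=f_{n-1}$ when $x\in R\cup T$, and for $n\ge 3$, $g_n(x)=f_{n-1}-f_{n-2}$ when $x\in S$ (for $n=2$ one substitutes $g_1\equiv 1$ directly). Summing the $2m-1$ letters in $R\cup T$ against the $m-1$ letters in $S$ yields, for $n\ge 3$,
\[
f_n=(2m-1)f_{n-1}+(m-1)(f_{n-1}-f_{n-2})=(3m-2)f_{n-1}-(m-1)f_{n-2}.
\]

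The characteristic polynomial of this recurrence is $x^2-(3m-2)x+(m-1)$, whose discriminant is $(3m-2)^2-4(m-1)=9m^2-16m+8$ and whose roots are the $\beta_1,\beta_2$ in the statement. Hence $f_n=A\beta_1^n+B\beta_2^n$ for constants determined by the initial values $f_1=3m-2$ and $f_2=(3m-2)^2-(m-1)$ (the latter is the total number of pairs minus the $m-1$ forbidden ones, matching the recurrence used at $n=2$ with $g_1\equiv 1$). Plugging in the proposed $A=\beta_1/(\beta_1-\beta_2)$ and $B=\beta_2/(\beta_2-\beta_1)$ and using Vieta's relations $\beta_1+\beta_2=3m-2$ and $\beta_1\beta_2=m-1$, one checks $A\beta_1+B\beta_2=(\beta_1^2-\beta_2^2)/(\beta_1-\beta_2)=\beta_1+\beta_2=f_1$ and $A\beta_1^2+B\beta_2^2=\beta_1^2+\beta_1\beta_2+\beta_2^2=(\beta_1+\beta_2)^2-\beta_1\beta_2=f_2$, confirming the closed form.

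The only subtle point is the type-invariance of $g_n$ in the inductive step, which rests on the observation that the forbidden pairs $(3i,3i-1)$ are pairwise disjoint as ordered pairs and no letter belongs to more than one such pair; thus permuting the $m-1$ coupled pairs among themselves is a language automorphism, forcing all letters of a given type to play symmetric roles. Once this symmetry is in hand, the remainder of the argument is a routine verification.
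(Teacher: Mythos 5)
Your proof is correct and follows essentially the same route as the paper: both arguments reduce to the recurrence $|W_{3m-2}(n)|=(3m-2)|W_{3m-2}(n-1)|-(m-1)|W_{3m-2}(n-2)|$ and then solve it, the paper via generating functions and partial fractions, you via the characteristic polynomial and the initial values $f_1, f_2$. Your last-letter/transfer-matrix justification of the recurrence is a more detailed version of the paper's one-line ``append a letter and subtract the $(m-1)b_{n-2}$ bad extensions'' count, so the differences are cosmetic.
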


\begin{proof} 
Let $b_0=1$, and let $|b_n=W_{3m-2}(n)|$ for $n\geq 1$. It is then easy
to see that $b_1=3m-2$, and 
\begin{equation} \label{bnrec} b_n=(3m-2)b_{n-1}-(m-1)b_{n-2} \end{equation}
for $n\geq 2$. Indeed, if we take an element of $V_{3m-2}(n-1)$, and append
one of our $3m-2$ letters to its end, we will get an element 
of $W_{3m-2}(n)$, except in the $(m-1)b_{n-2}$ cases in which the last two 
letters of the new word form one of the forbidden factors.

Introducting the generating function $B(x)=\sum_{n\geq 1}b_nx^n$, we can turn
formula (\ref{bnrec}) into a functional equation. Solving that equation,
we get
\begin{equation} \label{funcb} B(x)=\frac{1}{1-(3m-2)x+(m-1)x^2}.\end{equation}

Finding the roots $r_1=\frac{3m-2+\sqrt{9m^2-16m+8}}{2(m-1)}$  and $r_2=\frac{3m-2-\sqrt{9m^2-16m+8}}{2(m-1)}$ of the denominator of $B(x)$, we see that $B(x)$ can
be converted to the partial fraction form 
\[B(x)=\frac{C_1(m)}{1-\frac{x}{\beta_1}} + \frac{C_2(m)}{1-\frac{x}{\beta_2}},\]
and our claim is now routine to prove. 
\end{proof}

\begin{corollary} \label{evencor}
For all {\em even} positive integers $k$, the inequality
\[S_n(M_{k}) \leq (2.25k^2)^n\]
holds.
\end{corollary}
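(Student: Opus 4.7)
The plan is to combine the injection established in Lemma \ref{inducl}(b) with a crude upper bound on the number of admissible words. Writing $k = 2m$ with $m \geq 2$, Lemma \ref{inducl}(b) provides an injection $f_{2m}: Av_n(M_{2m}) \to W_{3m-2}(n) \times W_{3m-2}(n)$, whence $S_n(M_k) \leq |W_{3m-2}(n)|^2$. Since $(3m)^{2n} = (9m^2)^n = (9k^2/4)^n = (2.25 k^2)^n$, the corollary reduces to establishing the single-variable inequality $|W_{3m-2}(n)| \leq (3m)^n$ for every $n \geq 0$.

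To prove this bound I would \emph{not} invoke the closed form in Proposition \ref{upperw}, because that route forces one to track the constants $C_1(m)$ and $C_2(m)$ and to verify that $C_1(m) \leq (3m/\beta_1)^n$ for small $n$; instead I would appeal to the linear recurrence \eqref{bnrec} from its proof directly. Setting $b_n = |W_{3m-2}(n)|$, we have $b_0 = 1$, $b_1 = 3m - 2$, and $b_n = (3m-2) b_{n-1} - (m-1) b_{n-2}$ for $n \geq 2$. Since $m \geq 2$ and $b_{n-2} \geq 0$, the subtracted correction is nonnegative, so $b_n \leq (3m-2) b_{n-1} \leq (3m)\, b_{n-1}$. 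A one-line induction from $b_0 = 1$ then yields $b_n \leq (3m)^n$, and composing with the injection above gives $S_n(M_k) \leq (3m)^{2n} = (2.25 k^2)^n$, exactly as required.

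The sole remaining even value is $k = 2$, which lies outside the range of Lemma \ref{inducl} (stated for $m \geq 2$); but $S_n(12) = 1$ for every $n \geq 1$, because only the decreasing permutation avoids $12$, so the target $9^n$ is trivial. Beyond this small edge case there is no real obstacle: the substantive content has already been absorbed into Lemma \ref{inducl}(b) and the recurrence \eqref{bnrec}, and the only pitfall worth flagging is the temptation to read the bound off the partial-fraction form $B(x) = C_1(m)/(1 - x/\beta_1) + C_2(m)/(1 - x/\beta_2)$, which converts a clean one-step contraction into a fiddly check on the leading constant.
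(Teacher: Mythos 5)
Your proposal is correct, and the skeleton is the same as the paper's (apply the injection of Lemma \ref{inducl}(b), then bound $|W_{3m-2}(n)|$), but the way you bound the word count is genuinely different and simpler. The paper routes through Proposition \ref{upperw}: it solves the recurrence \eqref{bnrec} in closed form, $|W_{3m-2}(n)| = C_1\beta_1^n + C_2\beta_2^n$, and then has to control the constants ($C_1 < 2$, $C_2 < 0$, $\beta_2 < 1$) and even invoke the extra observation that both image words must begin with the letter $1$ (so that $S_n(M_k) \leq |W_{3m-2}(n-1)|^2$) in order to absorb the factor $C_1$ into one power of $\beta_1$. Your one-step contraction $b_n = (3m-2)b_{n-1} - (m-1)b_{n-2} \leq (3m-2)b_{n-1}$ (valid since $m \geq 2$ and $b_{n-2} \geq 0$) gives $b_n \leq (3m-2)^n$ outright, which is exactly the bound the paper ends up quoting, with none of the constant-chasing; and in fact $(3m-2)^{2n} = (1.5k-2)^{2n} \leq (2.25k^2)^n$ is slightly better than the $(3m)^{2n}$ you settle for. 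What the paper's heavier machinery buys is the exact asymptotic growth rate $\beta_1^2$ of the upper bound, which is marginally sharper than $(3m-2)^2$, but that sharpness is discarded in the final statement anyway. Your separate treatment of $k=2$ is a reasonable piece of bookkeeping that the paper silently omits (its argument really covers only $k = 2m$ with $m \geq 2$).
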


\begin{proof} Let $k=2m$.
Part (b) of Lemma \ref{inducl} inductively constructs an injective map $f_{2m} :Av_{n}(M_{2m})
\rightarrow W_{3m-2}(n) \times W_{3m-2}(n)$. That map is not bijective. Indeed, it is obvious
from the definition of $f_{2m}$ that if $f_{2m}(p)=(w,w')$, then both $w$ and $w'$ must start
with the letter 1.

Therefore, we know that 
\begin{equation} \label{almost}
S_n(M_{k})\leq |W_{3m-2}(n-1)|^2=\left(C_1(m)\beta_1^{n-1} + C_2(m)\beta_2^{n-1}\right)^2.
\end{equation}

It is routine to verify that for all  integers $m>1$, the inequality
$\beta_2<1$ holds.  As $\beta_1\beta_2=m-1$, this means that $\beta_1>m-1$, and so
$C_1=\frac{\beta_1}{\beta_1-\beta_2}$, this implies the inequality $C_1<2$ for $m\geq 3$. 
This same inequality can be verified for $m=2$, since in that case, $\beta_1=2+\sqrt{3}$, and
$\beta_2=2-\sqrt{3}$. Furthermore, $C_2=\frac{\beta_2}{\beta_2-\beta_1}<0$ since the 
denominator is negative. Hence,(\ref{almost}) implies
\[S_n(M_k) \leq (2\beta_1^{n-1})^2 \leq  \beta^{2n} .\]

It is easy to prove from the definition of $\beta$ that for all integers $m>1$, the inequality
 $\beta < 3m-2$ holds. Therefore, 
\[S_n(M_k)\leq \beta^{2n} < (3m-2)^{2n}= (1.5k-2)^{2n}=(2.25k^2-3k+4)^n.\]
\end{proof}

\begin{proposition}
\label{upperv} For all integers $m\geq 2$, 
we have
\[|V_{3m-2}(n)| = K_1 \cdot \alpha_1^n  + K_2 \cdot \alpha_2^n \]
where $\alpha_1 = \frac{3m-4+\sqrt{9m^2-28+24}}{2}$,  and
 $\alpha_2 = \frac{3m-4-\sqrt{9m^2-28+24}}{2}$, while $K_1=\frac{\alpha_1}{\alpha_1-
 \alpha_2}$ and $K_2=\frac{\alpha_2}{\alpha_2-\alpha_1}$. 
\end{proposition}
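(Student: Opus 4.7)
The plan is to follow the same template as Proposition \ref{upperw}: derive a two-term linear recurrence for $a_n=|V_{3m-2}(n)|$, translate it into a rational generating function, factor the denominator, and read off the closed form from the partial-fraction expansion.

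The first step is the recurrence. Starting from any word in $V_{3m-2}(n-1)$, appending one letter from the underlying alphabet produces each length-$n$ word exactly once; such a word lies in $V_{3m-2}(n)$ unless its final two letters form one of the forbidden factors $(3i)(3i-1)$. Counting the admissible letters and the number of forbidden factors directly from the definition of $V_{3m-2}(n)$ in Lemma \ref{inducl}, and observing that once a forbidden terminal pair is fixed the prefix of length $n-2$ can be any element of $V_{3m-2}(n-2)$, gives by inclusion--exclusion
\[ a_n = (3m-4)\,a_{n-1} - (m-2)\,a_{n-2} \qquad (n\ge 2), \]
with $a_0=1$ and $a_1=3m-4$. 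The coefficient $(m-2)$ is the number of positive integers $i$ for which the factor $(3i)(3i-1)$ is expressible in the alphabet, and this count is the one place that deserves a careful look, since it propagates into every subsequent constant.

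Next, I would convert to a generating function. Setting $A(x)=\sum_{n\ge 0}a_nx^n$, the recurrence together with the initial values yields
\[ A(x)=\frac{1}{1-(3m-4)x+(m-2)x^2}. \]
The discriminant of the denominator, viewed as a polynomial in $x$, is $(3m-4)^2-4(m-2)=9m^2-28m+24$, which is positive for every $m\ge 2$ (a routine check), so the two roots in $x$ are real and distinct. Those roots are $\alpha_1^{-1}$ and $\alpha_2^{-1}$, where $\alpha_1$ and $\alpha_2$ are exactly the roots of the characteristic polynomial $y^2-(3m-4)y+(m-2)=0$, namely the expressions given in the statement.

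Finally, a standard partial-fraction decomposition writes
\[ A(x)=\frac{K_1}{1-x/\alpha_1}+\frac{K_2}{1-x/\alpha_2}, \]
and clearing denominators to match the numerator $1$ pins down $K_1=\alpha_1/(\alpha_1-\alpha_2)$ and $K_2=\alpha_2/(\alpha_2-\alpha_1)$ as in the statement; expanding each summand as a geometric series and extracting the coefficient of $x^n$ then yields the claimed closed form. I do not anticipate any serious obstacle here: the argument is structurally identical to that of Proposition \ref{upperw}, with the parameters $(3m-2,m-1)$ replaced throughout by $(3m-4,m-2)$.
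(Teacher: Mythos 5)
Your proof is correct and is exactly the argument the paper intends: the paper's own proof of this proposition is simply ``analogous to Proposition \ref{upperw},'' and you have carried out that analogy with the right parameters, in particular correctly counting $m-2$ forbidden factors $(3i)(3i-1)$ in the alphabet $\{0,1,\dots,3m-5\}$ so that the recurrence becomes $a_n=(3m-4)a_{n-1}-(m-2)a_{n-2}$ and the discriminant is $9m^2-28m+24$ (note the statement's $\sqrt{9m^2-28+24}$ and the subscript $3m-2$ are typos for $\sqrt{9m^2-28m+24}$ and $V_{3m-4}$, which you have implicitly and correctly repaired). Nothing further is needed.
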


\begin{proof} Analogous to that of Proposition \ref{upperw}.
\end{proof}

\begin{corollary}
For all {\em odd} positive integers $k$, the inequality 
\[S_n(M_{k}) \leq (2.25k^2)^n\]
holds.
\end{corollary}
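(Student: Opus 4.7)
The plan is to mirror the proof of Corollary \ref{evencor} almost line by line, substituting the injection $f_{2m-1}$ from part (a) of Lemma \ref{inducl} for $f_{2m}$, and using Proposition \ref{upperv} in place of Proposition \ref{upperw}. Setting $k = 2m-1$, so that $3m-4 = (3k-5)/2$, the injection $f_{2m-1}$ lands in $V_{3m-4}(n) \times V_{3m-4}(n)$. First I would extract from the inductive construction of $f_{2m-1}$ that, in any image $(w, w')$, both $w_1$ and $w'_1$ lie in $\{0,1\}$: both the first position $p_1$ and the entry of value $1$ are either green (forcing a $0$ in the corresponding coordinate) or yellow, and in the yellow case each occupies the first letter of the image of $f_{2m-2}$, which by the induction hypothesis for part (b) is forced to equal $1$. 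For the base case $m=2$ this is automatic, since the alphabet of $V_2(n)$ is already $\{0,1\}$. Because no forbidden factor $(3i)(3i-1)$ begins with $0$ or $1$, prepending either letter to a word of $V_{3m-4}(n-1)$ produces a valid word of $V_{3m-4}(n)$; hence
\[ S_n(M_{2m-1}) \leq 4 \, |V_{3m-4}(n-1)|^2. \]

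Next I would invoke Proposition \ref{upperv}; the same elementary estimates used in the even case (namely $\alpha_2 < 1$, $\alpha_1 \geq 2$, $K_1 \leq 2$, and $K_2 \leq 0$, all of which follow from the relations $\alpha_1 + \alpha_2 = 3m-4$ and $\alpha_1\alpha_2 = m-2$) yield $|V_{3m-4}(n-1)| \leq 2 \alpha_1^{n-1}$, and therefore
\[ S_n(M_{2m-1}) \leq 16 \, \alpha_1^{2n-2}. \]
From $\alpha_1 + \alpha_2 = 3m - 4$ and $\alpha_2 \geq 0$ I obtain $\alpha_1 \leq 3m - 4 = (3k-5)/2$, hence $\alpha_1^2 \leq 2.25k^2 - 7.5k + 6.25 < 2.25k^2$. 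Combined with the elementary inequality $16 \leq 2.25k^2$, which holds for every odd $k \geq 3$, this gives
\[ 16 \, \alpha_1^{2n-2} = 16 \, (\alpha_1^2)^{n-1} \leq 16 \, (2.25k^2)^{n-1} \leq (2.25k^2)^n, \]
which is the claimed bound.

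The main obstacle is the combinatorial step rather than the arithmetic: the assertion $w_1, w'_1 \in \{0,1\}$ must be unpacked carefully from the inductive construction of $f_{2m-1}$, and crucially relies on the even-case observation that every image of $f_{2m-2}$ begins with the letter $1$. Once that is done, the calculation is a rerun of the even case, the only essential difference being that the extra factor of $4$ arising from the two allowed first letters is absorbed not by the bound $\alpha_1 \geq 2$ (as in the even case for $\beta_1$) but by the inequality $16 \leq 2.25k^2$ valid for all $k \geq 3$.
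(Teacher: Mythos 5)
Your proof is correct and follows the paper's intended route: part (a) of Lemma \ref{inducl} composed with the counting estimate of Proposition \ref{upperv}, then the arithmetic $\alpha_1\leq 3m-4=(3k-5)/2$. The one place you deviate is actually a point where you are more careful than the paper. The paper's proof asserts, "by analogy" with Corollary \ref{evencor}, that $S_n(M_k)\leq |V_{3m-4}(n-1)|^2$; the strict analogue of the even-case justification would require both first letters of the image pair to be forced to a \emph{single} value, which already fails for $m=2$ (e.g.\ $v(132)=011$ while $v(321)=111$). Your version of the claim — that the first letters lie in $\{0,1\}$, since $p_1$ (respectively the entry of value $1$) is either green, or else yellow and therefore heads an image of $f_{2m-2}$, which begins with the letter $1$ by the even-case observation — is the correct statement, and you properly track the resulting factor of $16$ and absorb it via $16\leq 2.25k^2$ for $k\geq 3$ instead of via $\beta_1\geq 2$ as in the even case. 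The supporting estimates ($K_1\leq 2$, $K_2\leq 0$ from $\alpha_1+\alpha_2=3m-4$ and $\alpha_1\alpha_2=m-2$) are the right ones, correcting in passing the typos in the statement of Proposition \ref{upperv}. In short: same approach, with a small gap in the paper's own sketch filled in.
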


\begin{proof} The proof is analogous to that of Corollary \ref{evencor}.
The only difference is that now we set $k=2m-1$, and then we use part (a)
of Lemma \ref{inducl}. We get that 
\[S_n(M_{k})\leq |V_{3m-4}(n-1)|^2\leq \alpha_1^{2n}.\]
So \begin{eqnarray*} S_n(M_k) & \leq  & \alpha_1^{2n} \\
&  <  & (3m-4)^{2n} \\
& = & (1.5k-2.5)^{2n}\\
& = & (2.25k^2-7.5k+6.25)^n \\
& \leq & (2.25k^2)^n.\end{eqnarray*}
\end{proof}

\end{document}